\newcommand{\bS}{\boldsymbol{S}}
\newcommand{\bcdot}{\boldsymbol{\cdot}}
\newcommand{\bkappa}{\boldsymbol{\kappa}}
\newtheorem{theorem}{Theorem}
\theoremstyle{remark}
\newtheorem{remark}{Remark}
\begin{document}

\title[Spectral finiteness property]{Finiteness Property of a Bounded Set
of Matrices with Uniformly Sub-Peripheral Spectrum}

\author{Xiongping Dai}
\address{Department of Mathematics, Nanjing University, Nanjing 210093,
People's Republic of China}
\email{xpdai@nju.edu.cn}

\author{Victor Kozyakin}
\address{Institute for Information Transmission Problems, Russian Academy
of Sciences, Bolshoj Karetny lane, 19.
127994, GSP-4, Moscow, Russia} \email{kozyakin@iitp.ru}

\thanks{X.~Dai was supported partly by National Natural Science
Foundation of China (Grant no. 11071112) and PAPD of Jiangsu Higher
Education Institutions. V.~Kozyakin was supported partly by the
Russian Foundation for Basic Research, project no. 10-01-93112.}


\subjclass[{2000}]{Primary 15B52; Secondary 15A60, 93D20, 65F15.}



\keywords{Joint/generalized spectral radius, finiteness property,
peripheral spectrum}
\begin{abstract}
In the paper, a simple condition guaranteing the finiteness property, for a bounded set $\bS=\{S_k\}_{k\in K}$ of real or complex $d\times d$ matrices, is presented. It is
shown that existence of a sequence of matrix products
$\bS_{\!{\sigma(n_{\ell})}}$ of length $n_\ell$ for $\bS$ with $n_{\ell}\to\infty$ such that
the spectrum of each matrix $\bS_{\!{\sigma(n_{\ell})}}$ is uniformly
sub-peripheral and
\[
{\rho}({\bS}):=\sup_{n\ge1}\sup_{i_1,\dotsc,i_n\in K}\sqrt[n]{{\rho(S_{i_1}\dotsm S_{i_n})}}=\lim_{\ell\to+\infty}\sqrt[\uproot{3}n_{\ell}]{\rho(\bS_{\!{\sigma(n_{\ell})}})},
\]
guarantees the spectral finiteness property for $\bS$.
\end{abstract}

\maketitle
\section{Introduction}\label{sec1}%
In this paper, we prove the finite-step realizability of the
joint/generalized spectral radius for a bounded set of matrices with the
so-called uniformly sub-peripheral spectrum.
\subsection{Joint and generalized spectral radii}\label{sec1.1}
Throughout this paper, we let
\[
\bS=\{S_k\}_{k\in K},\quad \mathrm{card}(K)\ge2,
\]
be a bounded set of $d\times d$ matrices over the field
$\mathbb{F}=\mathbb{R},\mathbb{C}$ indexed by elements from some set $K$.  Let also $\|\cdot\|$ be a row-vector
norm on ${\mathbb{F}}^{1\times d}$ and also the induced matrix-norm on
$\mathbb{F}^{d\times d}$. Associate with any finite-length word
\[
\sigma= \{i_{1},\ldots,i_{n}\}\in
K^n:=\stackrel{n\textrm{-time}}{\overbrace{K\times\dotsm\times K}}
\]
the matrix $\bS_{\!\sigma}=S_{i_1}\dotsm S_{i_n}$, and define for
any integer $n\ge1$ two quantities
\[
\hat{\rho}_{n}({\bS})=\sup_{\sigma\in K^n}{\|\bS_{\!\sigma}\|}\quad
\textrm{and}\quad {\rho}_{n}({\bS})=\sup_{\sigma\in K^n}{
\rho(\bS_{\!\sigma})}.
\]
Here $\rho(A)$ stands for the usual spectral radius for an
arbitrary matrix $A\in\mathbb{F}^{d\times d}$. Then by the
sub-multiplicative property $\|AB\|\le\|A\|\cdot\|B\|$ for all
$A,B\in\mathbb{F}^{d\times d}$ there exists the limit
\[
\hat{\rho}({\bS})=\limsup_{n\to+\infty}\sqrt[n]{\hat{\rho}_{n}({\bS})}\quad
\bigl(~=\lim_{n\to+\infty}\sqrt[n]{\hat{\rho}_{n}({\bS})}=\inf_{n\ge1}\sqrt[n]{\hat{\rho}_n(\bS)}\bigr),
\]
which does not depend on the choice of the norm $\|\cdot\|$. This
limit was called by  Rota and Strang \cite{RotaStr:IM60} \emph{the
joint spectral radius} of the matrix set $\bS$. Analogously, there
exists the limit
\[
{\rho}({\bS})=
\limsup_{n\to+\infty}\sqrt[n]{{\rho}_{n}({\bS})}\quad\bigl(~=\sup_{n\ge1}\sqrt[n]{{\rho}_{n}({\bS})}\bigr),
\]
which was called by Daubechies and Lagarias \cite{DaubLag:LAA92}
\emph{the generalized spectral radius} of the matrix set $\bS$. As
is shown in \cite{BerWang:LAA92}, for finite matrix sets $\bS$ the
quantities $\hat{\rho}({\bS})$ and ${\rho}({\bS})$ coincide with
each other, and for any $n$ the following inequalities hold
\begin{equation}\label{E0}
\sqrt[n]{{\rho}_{n}({\bS})}\le {\rho}({\bS})=\hat{\rho}({\bS})\le
\sqrt[n]{\hat{\rho}_{n}({\bS})}
\end{equation}
which are useful for numerical computation of the joint spectral radius
$\hat{\rho}({\bS})$.
\subsection{Spectral finiteness property}\label{sec1.2}
In \cite{LagWang:LAA95} Lagarias and Wang conjectured that for finite sets $\bS$ the value ${\rho}(\bS)$ in fact coincides with
$\sqrt[n]{\rho(\bS_{\!\sigma})}$ for some $n$ and $\sigma\in
K^{n}$; that is to say, $\bS$ has the spectral \emph{finiteness
property}. If this \emph{Finiteness conjecture} is true, then the
problem of determining whether ${\rho}(\bS)<1$ is decidable. This
is because if ${\rho}(\bS)<1$, then there exists $n$ such that
${\rho}_{n}({\bS})<1$, whereas if $\hat{\rho}(\bS)\ge 1$, the
Finiteness conjecture implies that there exists $n$ such that
$\hat{\rho}_{n}(\bS)\ge1$. By checking both conditions for
increasing values of $n$, one of them will be eventually satisfied
and a decision will be made after a finite amount of computation.
Note that for a single matrix the problem is decidable. So, the
Finiteness conjecture has strong implications on the computation of
the joint/generalized spectral radius.

A simplest example of matrix sets having the finiteness property
are bounded sets of matrices consisting of upper (or lower)
triangular matrices. Another trivial example deliver bounded matrix
sets $\bS$ consisting of matrices $S\in\bS$ `isometric to a scalar
factor' in some row-vector norm $\|\cdot\|$ on $\mathbb{F}^{1\times
d}$, i.e., such that for any $x\in\mathbb{F}^{1\times d}$,
$\|xS\|=\lambda_{S}\|x\|$ with some constant $\lambda_{S}$. One
more example was given by Plischke and Wirth \cite{PW:LAA08} who
proved that irreducible\footnote{A matrix set $\bS$ is called
\emph{irreducible}, if the matrices from this set have no common
invariant subspaces except $\{\mathbf{0}\}$ and
${\mathbb{F}}^{1\times d}$. We notice that this is completely
different with the `irreducibility' of a Markov transition matrix
in probability theory.\label{foot3}} bounded `symmetric' matrix
sets\footnote{A matrix set $\bS$ is called \emph{symmetric} if
$S\in\bS$ implies that $S^{*}\in\bS$, where $S^{*}$ is a matrix
conjugate to $S$.} possess the finiteness property. Less trivial
examples were constructed by Omladi\v{c} and Radjavi in
\cite{OmRadj:LAA97}, where they showed that the finiteness property
holds for matrix sets $\bS$ for which the semigroup $\bS^+$ of all
the products of matrices from $\bS$ possesses the so-called
`sub-multiplicative spectral radius property', i.e.,
$\rho(FH)\le\rho(F)\cdot\rho(H)$ for all $F,H\in\bS^+$.

In \cite{Gurv:LAA95} Gurvits showed that, for real matrix sets
$\bS$, the Finiteness conjecture holds if there is a real polytope
extremal norm\footnote{A norm $\|\cdot\|$ is called \emph{extremal}
for the matrix set $\bS$ if  $\|S\|\le\rho(\bS)$ for all
$S\in\bS$}. In \cite{LagWang:LAA95}, Lagarias \& Wang proved a more
general result that Finiteness conjecture holds if there is a
piecewise real analytic extremal norm. At last, as showed Guglielmi
\textit{et al}. \cite{GWZ:SIAMJMA05}, for complex matrix sets
$\bS$, the Finiteness conjecture holds if there is a complex
polytope extremal norm. However, to make use of these results, one
needs to know whether a set $\bS$ admits an extremal norm or not.
It was shown, e.g., in \cite{Bar:ARC88,Koz:AiT90:6:e,Dai:JMAA11}
that bounded irreducible sets of matrices always admit extremal
norms, yet nothing proves that polytope or piecewise analytic
extremal norms are always possible, see, e.g., \cite{Theys:PhD05}
and references therein. In \cite{GWZ:SIAMJMA05} Guglielmi
\textit{et al.} conjectured that every non-defective\footnote{A
matrix set $\bS$ is called \emph{non-defective} if the semigroup
generated by the set $\rho(\bS)^{-1}\bS$ is bounded.} finite family
of complex matrices that possesses the finiteness property has a
complex polytope extremal norm. Unfortunately, later on this
conjecture was disproved by Jungers \& Protasov
\cite{JP:SIAMJMA09}.

Despite of the above examples in which the Finiteness conjecture holds,
the Finiteness conjecture is turned to be false in general. The first
counterexample to the Finiteness conjecture was given by Bousch and
Mairesse in \cite{BM:JAMS02}, and the corresponding proof was essentially
based on the analysis of the so-called topical maps and Sturmian measures.
Later on in \cite{BTV:MTNS02,BTV:SIAMJMA03} Blondel, Theys and Vladimirov
proposed another proof of the counterexample to the Finiteness Conjecture,
which extensively exploited combinatorial properties of permutations of
products of positive matrices. In the control theory, as well as in the
general theory of dynamical systems, the notion of generalized spectral
radius is used basically to describe the rate of growth or decrease of the
trajectories generated by matrix products. In connection with this,
Kozyakin in \cite{Koz:CDC05:e,Koz:INFOPROC06:e} presented one more proof
of the counterexample to the Finiteness conjecture fulfilled in the spirit
of the theory of dynamical systems. In this proof, the method of Barabanov
norms \cite{Bar:ARC88} was the key instrument in disproving the Finiteness
conjecture. The related constructions were essentially based on the study
of the geometrical properties of the unit balls of some specific Barabanov
norms and properties of discontinuous orientation preserving circle maps.

To appreciate the merits of the above mentioned disproofs of the
Finiteness conjecture let us point out that the key ideas
underlying all the proofs in
\cite{BM:JAMS02,BTV:SIAMJMA03,Koz:CDC05:e,Koz:INFOPROC06:e} were
based on the frequency properties of the Sturmian sequences. In
\cite{BM:JAMS02} such properties were formulated and investigated
in terms of the so-called Sturmian ergodic invariant measures on
the spaces of binary sequences. In
\cite{BTV:SIAMJMA03,Koz:CDC05:e,Koz:INFOPROC06:e}, the ergodic
theory formally was not mentioned. However, the usage of
combinatorial properties of Sturmian sequences in
\cite{BTV:SIAMJMA03} or of the fact that Sturmian sequences
naturally arise in symbolic description of trajectories of
(discontinuous) orientation preserving circle rotation maps in
\cite{Koz:CDC05:e,Koz:INFOPROC06:e} were essentially motivated
namely by ergodic properties of Sturmian sequences.

Unfortunately, all the disproofs
\cite{BM:JAMS02,BTV:SIAMJMA03,Koz:CDC05:e,Koz:INFOPROC06:e} of the
Finiteness conjecture were pure `existence' (or, sooner,
`non-existence') unconstructive results. Only recently, in
\cite{HMST:AdvMath11} Hare \textit{et al.} combined the approaches
developed in
\cite{BM:JAMS02,BTV:SIAMJMA03,Koz:CDC05:e,Koz:INFOPROC06:e} with
some rapidly-converging lower bounds for the joint spectral radius
based on the multiplicative ergodic theory obtained by Morris
\cite{Morris:ADVM10}, which allowed them to build explicitly the
set of matrices for which the Finiteness conjecture fails. Namely,
for the matrix set
\[
\bS({\alpha}):=
\left\{S_1=\left[\begin{matrix}1&1\\0&1\end{matrix}\right],
S_2=\alpha\left[\begin{matrix}1&0\\1&1\end{matrix}\right]\right\}
\]
they computed an explicit value of
\[
\alpha_{*} \simeq
0.749326546330367557943961948091344672091327370236064317358024\ldots
\]
such that $\bS({\alpha_{*}})$ does not satisfy the finiteness property. It
is still unknown whether $\alpha_{*}$ is rational or not.

So, ideas of ergodic theory are proved to be fruitful in disproving the
Finiteness conjecture
\cite{BM:JAMS02,BTV:SIAMJMA03,Koz:CDC05:e,Koz:INFOPROC06:e,Morris:ADVM10,HMST:AdvMath11}.
Further development of the ergodic theory approach to investigation of the
properties of the joint spectral radius was done by Dai \emph{et al.} in
\cite{DHX:ERA11,DHX:AUT11}. Based on the classic multiplicative ergodic
theorem and the semiuniform subadditive ergodic theorem, they showed in
particular that there always exists at least one ergodic Borel probability
measure on the one-sided symbolic space $\varSigma_{\!K}^+$ of all
one-sided infinite sequences $i_{\bcdot}\colon\mathbb{N}\to K$
such that the joint spectral radius of a finite set of square matrices
$\bS$ can be realized almost everywhere with respect to this measure
\cite{DHX:ERA11}.

Since the Finiteness conjecture was proved to be invalid generally,
serious efforts were undertaken by some investigators to find less general
classes of matrices for which the Finiteness conjecture still might be
valid. One of the most interesting such classes constitute matrices with
rational entries. In \cite{JB:LAA08,Jungers:09}, Jungers and Blondel
showed that the finiteness property holds for nonnegative rational
matrices if and only if it holds for pairs of binary matrices, i.e.,
matrices with the entries $\{0,1\}$. So they conjectured that pairs of
binary matrices always have the finiteness property. In support to this
conjecture they proved that the finiteness property holds for pairs of
$2\times 2$ binary matrices. They gave also a similar result for matrices
with negative entries. Namely, they proved that the finiteness property
holds for (general) rational matrices if and only if it holds for pairs of
sign-matrices, i.e., matrices with entries $\{-1,0,1\}$. More recently,
Cicone \textit{et al.} in \cite{CGSZ:LAA10} proved that the finiteness
property holds for pairs of $2\times 2$ sign-matrices; and Dai \textit{et
al.} in \cite{DHLX11} proved that for any pair $\bS=\{S_1,S_2\}\subset
\mathbb{R}^{d\times d}$, if one of $S_1,S_2$ has the rank $1$, then $\bS$
possesses the finiteness property.

The aim of this paper is to present yet another sufficient condition
enabling the finiteness property of a set of matrices.

\section{Pure peripheral spectrum and main statement}\label{sec2}
Recall that an eigenvalue $\lambda$ of a matrix
$A\in\mathbb{F}^{d\times d}$ is said to belong to the
\emph{peripheral spectrum} of $A$ if $|\lambda|=\rho(A)$. If
$|\lambda|=\rho(A)$ for all eigenvalues $\lambda$ of $A$, then we
say that $A$ has the \emph{pure peripheral spectrum}. For example,
any unitary matrix has the pure peripheral spectrum. Let us say
that a family of matrices $\bS_{\!\sigma}$ has the \emph{uniformly
sub-peripheral spectrum} if there exists a constant $\bkappa$ with
$0<\bkappa<1$ such that each eigenvalue $\lambda$ of
$\bS_{\!\sigma}$ satisfies
$\bkappa\rho(\bS_{\!\sigma})\le|\lambda|\le \rho(\bS_{\!\sigma})$.
Clearly, if the spectrum of every matrix $\bS_{\!\sigma}$ is pure
peripheral then the whole family of matrices $\bS_{\!\sigma}$ has a
uniformly sub-peripheral spectrum.

Now our main statement may be formulated as follows:

\begin{theorem}\label{T-main}
Let $\bS=\{S_k\}_{k\in K}\subset\mathbb{F}^{d\times d}$  be a bounded set
of matrices. If there exists a sequence of matrix products
$\bS_{\!{\sigma(n_{\ell})}}$ for $\bS$, where $\sigma(n_{\ell})\in
K^{n_\ell}$ and $n_{\ell}\to+\infty$, such that its spectrum is uniformly
sub-peripheral and
\begin{equation}\label{E1}
\rho(\bS)=\lim_{\ell\to+\infty}\sqrt[\uproot{3}n_{\ell}]{\rho(\bS_{\!{\sigma(n_{\ell})}})},
\end{equation}
then $\bS$ possesses the spectral finiteness property with $\rho(\bS)=
\sup_{k\in K}\rho(S_k)$.
\end{theorem}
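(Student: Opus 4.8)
The plan is to prove the stronger equality $\rho(\bS)=\sup_{k\in K}\rho(S_k)$ directly; once this is in hand the finiteness property follows at once, realized by words of length one (and attained by a single matrix when $K$ is finite). One inequality is immediate: taking $n=1$ in the chain \eqref{E0} gives $\sup_{k\in K}\rho(S_k)=\sqrt[1]{\rho_1(\bS)}\le\rho(\bS)$. The entire content is therefore the reverse inequality $\sup_{k\in K}\rho(S_k)\ge\rho(\bS)$, and the hard part is to extract, from a single long product whose eigenvalues are all bounded away from zero, one individual factor whose spectral radius is close to $\rho(\bS)$.

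The key device is the determinant. I would write $A_\ell=\bS_{\!\sigma(n_\ell)}=S_{i_1}\dotsm S_{i_{n_\ell}}$ and $r_\ell=\rho(A_\ell)$. Since $A_\ell$ is $d\times d$, its $d$ eigenvalues $\lambda_1,\dots,\lambda_d$ (with multiplicity) all satisfy $|\lambda_j|\ge\bkappa r_\ell$ by the uniformly sub-peripheral hypothesis, with one and the same $\bkappa\in(0,1)$; hence $|\det A_\ell|=\prod_{j=1}^d|\lambda_j|\ge(\bkappa r_\ell)^d$. On the other hand, multiplicativity of the determinant gives $|\det A_\ell|=\prod_{t=1}^{n_\ell}|\det S_{i_t}|$, and combining this with the geometric-mean--maximum inequality yields
\[
\sup_{k\in K}|\det S_k|\;\ge\;\max_{1\le t\le n_\ell}|\det S_{i_t}|\;\ge\;\Bigl(\prod_{t=1}^{n_\ell}|\det S_{i_t}|\Bigr)^{1/n_\ell}=|\det A_\ell|^{1/n_\ell}\;\ge\;\bkappa^{\,d/n_\ell}\bigl(r_\ell^{1/n_\ell}\bigr)^{d}.
\]

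Now I would let $\ell\to+\infty$. Because $\bkappa$ is fixed and $n_\ell\to+\infty$, the factor $\bkappa^{d/n_\ell}\to1$; and by the realizing hypothesis \eqref{E1}, $r_\ell^{1/n_\ell}\to\rho(\bS)$. (If $\rho(\bS)=0$ the statement is trivial, since then already $\sup_k\rho(S_k)=0$, so one may assume $\rho(\bS)>0$, which also forces $r_\ell>0$ for large $\ell$.) Passing to the limit on the right, and noting that the left-hand side does not depend on $\ell$, gives $\sup_{k\in K}|\det S_k|\ge\rho(\bS)^d$. Finally, for each $k$ the bound $|\det S_k|=\prod_j|\lambda_j(S_k)|\le\rho(S_k)^d$ converts this into $\bigl(\sup_{k\in K}\rho(S_k)\bigr)^d\ge\sup_{k\in K}|\det S_k|\ge\rho(\bS)^d$, that is $\sup_{k\in K}\rho(S_k)\ge\rho(\bS)$. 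Together with the free inequality this establishes $\rho(\bS)=\sup_{k\in K}\rho(S_k)$, and hence the finiteness property.

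I expect the only genuinely delicate points to be bookkeeping rather than depth. First, one must use that the single constant $\bkappa$ furnished by \emph{uniform} sub-peripherality is exactly what makes $\bkappa^{d/n_\ell}\to1$: a $\bkappa$ depending on $\ell$ and tending to $0$ would destroy the argument, so the uniformity is essential and should be flagged. Second, for a possibly infinite index set $K$ the suprema need not be attained; there an $\varepsilon$-argument recovers, for every $\varepsilon>0$, an index $k$ with $\rho(S_k)>(\rho(\bS)^d-\varepsilon)^{1/d}$, which is enough for the supremal form of the conclusion (and reduces to a genuine length-one realizing word whenever $K$ is finite).
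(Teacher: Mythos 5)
Your proof is correct and takes essentially the same route as the paper's: both arguments rest on the uniform sub-peripherality bound $|\det \bS_{\!\sigma(n_\ell)}|\ge(\bkappa\,\rho(\bS_{\!\sigma(n_\ell)}))^d$, the multiplicativity of the determinant along the product, and the elementary estimate $|\det S|\le\rho(S)^d$, combined with \eqref{E0} for the reverse inequality. Your detour through the geometric-mean--maximum inequality is only a cosmetic rearrangement of the paper's direct chain of inequalities, and your added remarks on the degenerate case $\rho(\bS)=0$ and on non-attained suprema for infinite $K$ are sound.
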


In light of the counterexample of Hare \textit{et al.}
\cite{HMST:AdvMath11} where $\bS=\{S_1,S_2\}$ and the spectra of
both $S_1$ and $S_2$ are pure peripheral, with $\rho(S_1)=1$ and
$\rho(S_2)=\alpha_*$, our assumption that the matrix sequence
$\langle\bS_{\!{\sigma(n_{\!\ell})}}\rangle_{\ell=1}^{+\infty}$ has
the uniformly sub-peripheral spectrum is essential for the
statement of Theorem~\ref{T-main}.

Let us present one example in which the claim of Theorem~\ref{T-main} is
evident. If each element of the multiplicative semigroup
$\bS^+\subset\mathbb{F}^{d\times d}$, generated by $\bS$, has the pure
peripheral spectrum then
\[
\rho(AB)=\sqrt[d]{\det(AB)}
=\sqrt[d]{\det(A)}\cdot\sqrt[d]{\det(B)}=\rho(A)\cdot\rho(B),\quad
\forall~ A,B\in\bS^+.
\]
This implies that
\[
\rho(\bS)=\sup_{k\in K}\rho(S_k),
\]
and so in this case the set of matrices $\bS$ has the spectral
finiteness property.

Matrix multiplicative semigroups satisfying
$\rho(AB)=\rho(A)\cdot\rho(B)$ for any their members $A$ and $B$
are called \emph{semigroups with multiplicative spectral radius},
see, e.g. \cite{OmRadj:LAA97}. As is shown in
\cite[Theorem~2.5]{OmRadj:LAA97}, for any such irreducible
semigroup of matrices there exists a (vector) norm $\|\cdot\|$ in
which each matrix from the semigroup is a direct sum of isometry
(in the norm $\|\cdot\|$) and a nilpotent matrix. Nontrivial
examples of semigroups with multiplicative spectral radius can be
found in \cite{OmRadj:LAA97}.

Let us remark that under the conditions of Theorem~\ref{T-main} the
set of matrices
$\left\langle\bS_{\!{\sigma(n_{\ell})}}\right\rangle_{\ell=1}^{+\infty}$
does not need to be a semigroup and moreover this set in general
lacks the multiplicative spectral radius property. Still,
Theorem~\ref{T-main} is valid in this more restrictive, comparing
with Theorem~2.5 from \cite{OmRadj:LAA97}, situation, too.

\begin{proof}
Let
$\left\langle\bS_{\!{\sigma(n_{\ell})}}\right\rangle_{\ell=1}^{+\infty}$
be a sequence of matrix products for $\bS$ specified by the condition of
Theorem~\ref{T-main}. Then, since the family of matrix products
$\langle\bS_{\!{\sigma(n_{\ell})}}\rangle_{\ell=1}^{+\infty}$ has the
uniformly sub-peripheral spectrum, we have
\begin{align*}
\bkappa\rho(\bS_{\!{\sigma(n_{\ell})}})&\le\sqrt[d]{\det(S_{i_1(n_{\ell})}\dotsm
S_{i_{n_{\ell}}(n_{\ell})})}=
\sqrt[d]{\det(S_{i_1(n_{\ell})})\dotsm
\det(S_{i_{n_{\ell}}(n_{\ell})})}\\
&=\sqrt[d]{\det(S_{i_1(n_{\ell})})}\dotsm
\sqrt[d]{\det(S_{i_{n_{\ell}}(n_{\ell})})}\le
\rho(S_{i_1(n_{\ell})})\dotsm \rho(S_{i_{n_{\ell}}(n_{\ell})})\\
&\le\left(\sup_{k\in K}\rho(S_k)\right)^{n_{\ell}},
\end{align*}
where $\sigma(n_{\ell})=(i_1(n_{\ell}),\dotsc,i_{n_{\ell}}(n_{\ell}))$,
for some constant $0<\bkappa<1$. Together with \eqref{E1}, these latter
inequalities imply that
\begin{equation}\label{E2}
\rho(\bS)=
\lim_{\ell\to+\infty}\sqrt[\uproot{3}n_{\ell}]{\rho(\bS_{\!{\sigma(n_{\ell})}})}\le
\lim_{\ell\to+\infty}\sqrt[\uproot{3}n_{\ell}]{\bkappa^{-1}\left(\sup_{k\in
K}\rho(S_k)\right)^{n_{\ell}}}= \sup_{k\in K}\rho(S_k).
\end{equation}
On the other hand, by \eqref{E0} we have
\[
\rho(\bS)\ge \sup_{k\in K}\rho(S_k),
\]
which together with \eqref{E2} implies
\[
\rho(\bS)= \sup_{k\in K}\rho(S_k).
\]
Theorem~\ref{T-main} is thus proved.
\end{proof}

\begin{remark}\label{R1}
From Theorem~\ref{T-main} it follows that if a
matrix set $\bS$ does not possesses the finiteness property then for any
sequence $\langle\bS_{\!{\sigma(n_{\ell})}}\rangle$ satisfying condition
\eqref{E1} the minimal absolute value of the eigenvalues of
$\bS_{\!{\sigma(n_{\ell})}}$ divided by $\rho(\bS_{\!{\sigma(n_{\ell})}})$
tends to zero as $\ell\to+\infty$.
\end{remark}

Let us recall now from \cite{Wirth:LAA02} that \emph{the limit
semigroup} $\bS_{\!\infty}$ generated by the set of matrices $\bS$
is defined to be the set of all limit points for the matrix
sequences
$\left\langle\rho(\bS)^{-n_{\ell}}\bS_{\!{\sigma(n_{\ell})}}\right\rangle_{\ell=1}^{+\infty}$,
where $\sigma(n_{\ell})\in K^{n_\ell}$ and $n_{\ell}\to\infty$. As
is known \cite{Wirth:LAA02}, the limit semigroup is nonempty
bounded when the set of matrices $\bS$ is irreducible.

As a consequence of Theorem~\ref{T-main}, we can obtain a
sufficient condition for the finiteness property for an irreducible
$\bS$.

\begin{theorem}\label{thm2.3}
Let an irreducible bounded set of matrices $\bS=\{S_k\}_{k\in
K}\subset\mathbb{F}^{d\times d}$ do not possess the finiteness property.
Then any matrix $A\in\bS_{\!\infty}$ is degenerate, that is, $\det A=0$.
\end{theorem}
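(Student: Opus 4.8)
The plan is to argue by contraposition, leveraging Theorem~\ref{T-main} together with the definition of the limit semigroup $\bS_{\!\infty}$. Suppose, for contradiction, that some matrix $A\in\bS_{\!\infty}$ is nondegenerate, i.e. $\det A\neq0$. By definition of the limit semigroup, there is a sequence of products $\bS_{\!{\sigma(n_{\ell})}}$ with $n_{\ell}\to\infty$ such that $\rho(\bS)^{-n_{\ell}}\bS_{\!{\sigma(n_{\ell})}}\to A$. The goal is to extract from this convergence the two hypotheses of Theorem~\ref{T-main}—namely the spectral normalization \eqref{E1} and the uniform sub-peripherality of the spectra—and thereby conclude that $\bS$ has the finiteness property, contradicting the assumption.

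First I would verify the normalization \eqref{E1}. Since $\rho$ is a continuous function on $\mathbb{F}^{d\times d}$ (eigenvalues depend continuously on matrix entries) and $\rho(\bS)^{-n_{\ell}}\bS_{\!{\sigma(n_{\ell})}}\to A$, continuity gives
\[
\rho(\bS)^{-n_{\ell}}\rho(\bS_{\!{\sigma(n_{\ell})}})=\rho\!\left(\rho(\bS)^{-n_{\ell}}\bS_{\!{\sigma(n_{\ell})}}\right)\longrightarrow\rho(A).
\]
The key point is that $\det A\neq0$ forces $\rho(A)>0$, so the spectral radii $\rho(\bS)^{-n_{\ell}}\rho(\bS_{\!{\sigma(n_{\ell})}})$ are bounded away from $0$ for large $\ell$. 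Taking $n_{\ell}$-th roots then yields $\rho(\bS)^{-1}\sqrt[n_{\ell}]{\rho(\bS_{\!{\sigma(n_{\ell})}})}\to1$, which is precisely condition \eqref{E1}. (This is the step where nondegeneracy of $A$ is exploited; had $\rho(A)=0$ been allowed, the $n_{\ell}$-th root could drift away from $\rho(\bS)$.)

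The main obstacle is establishing that the sequence $\langle\bS_{\!{\sigma(n_{\ell})}}\rangle$ has uniformly sub-peripheral spectrum. Here the idea is again continuity of the spectrum, but applied uniformly to all eigenvalues. Because $\det A\neq0$, every eigenvalue $\mu$ of $A$ satisfies $|\mu|\ge\delta$ for some $\delta>0$, while of course $|\mu|\le\rho(A)$. Passing to a subsequence if necessary, one can assume the normalized eigenvalues of $\bS_{\!{\sigma(n_{\ell})}}$ converge (as multisets) to those of $A$; then for $\ell$ large each normalized eigenvalue $\lambda/\rho(\bS)^{n_{\ell}}$ lies in an annulus $\tfrac{\delta}{2}\le|\lambda|/\rho(\bS)^{n_{\ell}}\le 2\rho(A)$. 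Dividing through by $\rho(\bS_{\!{\sigma(n_{\ell})}})/\rho(\bS)^{n_{\ell}}\to\rho(A)$ converts this into a bound of the form $\bkappa\,\rho(\bS_{\!{\sigma(n_{\ell})}})\le|\lambda|\le\rho(\bS_{\!{\sigma(n_{\ell})}})$ for a fixed constant $0<\bkappa<1$ and all large $\ell$, which is the uniform sub-peripherality required. The delicate part is handling the continuity of eigenvalues as an unordered collection and ruling out eigenvalues collapsing toward zero—this is exactly what $\det A\neq0$ prevents, since the product of the eigenvalues equals $\det A\neq0$ and none can vanish in the limit.

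With both hypotheses of Theorem~\ref{T-main} now verified for the (sub)sequence $\langle\bS_{\!{\sigma(n_{\ell})}}\rangle$, that theorem forces $\bS$ to possess the spectral finiteness property. This contradicts the standing assumption that $\bS$ does not. Therefore no nondegenerate $A$ can belong to $\bS_{\!\infty}$, i.e. every $A\in\bS_{\!\infty}$ satisfies $\det A=0$, completing the proof. Note that irreducibility of $\bS$ is used only to guarantee via \cite{Wirth:LAA02} that $\bS_{\!\infty}$ is nonempty and bounded, so that the statement is not vacuous and the limit points $A$ genuinely exist.
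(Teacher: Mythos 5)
Your proof is correct, and it rests on the same pillar as the paper's---applying Theorem~\ref{T-main} to the sequence of products that defines $A\in\bS_{\!\infty}$---but the execution is genuinely different. You verify \emph{both} hypotheses of Theorem~\ref{T-main}: condition \eqref{E1} via continuity of $\rho$ (exactly as the paper does), and uniform sub-peripherality via continuity of the spectrum as a multiset, using $\det A\neq0$ to bound all normalized eigenvalues away from zero; you then invoke the theorem directly to contradict the failure of the finiteness property. The paper instead verifies only \eqref{E1}, applies the \emph{contrapositive} of Theorem~\ref{T-main} (Remark~\ref{R1}) to conclude that the smallest eigenvalue $\lambda_{\ell}$ of $\bS_{\!{\sigma(n_{\ell})}}$ (normalized so that $\rho(\bS)=1$) tends to zero, and then reaches the contradiction through the determinant: since all eigenvalues have modulus at most $1$, one gets $|\det\bS_{\!{\sigma(n_{\ell})}}|\le|\lambda_{\ell}|\to0$, hence $\det A=0$ by continuity of $\det$. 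The paper's route is slightly more elementary in that it needs only the continuity of $\rho$ and $\det$, sidestepping the (true but more delicate) continuity of the unordered eigenvalue collection that you rely on; your route has the merit of never invoking the normalization $\rho(\bS)=1$ and of exhibiting explicitly that the defining sequence of a nonsingular limit point satisfies all the hypotheses of Theorem~\ref{T-main}, which makes the logical dependence on that theorem more transparent. One small point of care in your argument: the uniform sub-peripherality you obtain holds only for $\ell$ large, so you should pass to a tail of the sequence before applying Theorem~\ref{T-main}; this is harmless since the tail still satisfies $n_{\ell}\to\infty$ and \eqref{E1}.
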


\begin{proof}
Since the bounded set of matrices $\bS$ is irreducible then
$0<\rho(\bS)<+\infty$, see, e.g., \cite{Wirth:LAA02}. So, without loss of
generality, it may be assumed that $\rho(\bS)=1$. Fix an arbitrary matrix
$A\in\bS_{\!\infty}$. Then there exists a sequence of finite-length words
$\langle\sigma(n_\ell)\rangle_{\ell=1}^{+\infty}$ with
$\sigma(n_{\ell})\in K^{n_\ell}$ and $n_{\ell}\to\infty$ as
$\ell\to+\infty$, such that
\[
A=\lim_{\ell\to+\infty}\bS_{\!{\sigma(n_{\ell})}}.
\]
If $A$ would be singular, then we could stop our proof here. Next, we
assume $\det A\not=0$ and then $\rho(A)>0$. Since
$\rho(\bS_{\!{\sigma(n_{\ell})}})$ converges to $\rho(A)$ as
$\ell\to+\infty$, it follows
\[
\sqrt[\uproot{2}n_\ell]{\rho(\bS_{\!{\sigma(n_{\ell})}})}\to1=\rho(\bS).
\]
So, condition \eqref{E1} holds for the sequence
$\langle\bS_{\!{\sigma(n_{\ell})}}\rangle$. Then denoting by
$\lambda_{\ell}$ an eigenvalue of $\bS_{\!{\sigma(n_{\ell})}}$ with the
smallest absolute value we get by Remark~\ref{R1} that
\begin{equation}\label{E3}
\lambda_{\ell}\to0\quad\mbox{as}\quad \ell\to\infty.
\end{equation}
Now, by \eqref{E0} all the other eigenvalues of the matrix
$\bS_{\!{\sigma(n_{\ell})}}$ have the absolute values do not exceeding
$1$. So,
\[
|\det\bS_{\!{\sigma(n_{\ell})}}|\le|\lambda_{\ell}|
\]
and by \eqref{E3} we obtain
\[
\det A=\lim_{\ell\to+\infty}\det\bS_{\!{\sigma(n_{\ell})}}=0,
\]
which is a contradiction to the assumption.

Due to arbitrariness of the matrix $A\in\bS_{\!\infty}$, the theorem is
thus proved.
\end{proof}

It is interesting to formulate Theorem~\ref{thm2.3} equivalently as
follows:

\begin{theorem}\label{cor2.4}
Let a bounded set of matrices $\bS=\{S_k\}_{k\in
K}\subset\mathbb{F}^{d\times d}$ be irreducible. If there exists a
nonsingular $A\in\bS_{\!\infty}$, then $\bS$ has the finiteness property.
\end{theorem}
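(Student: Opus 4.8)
The statement to prove is Theorem~\ref{cor2.4}: if a bounded irreducible set $\bS$ admits a nonsingular matrix $A$ in its limit semigroup $\bS_{\!\infty}$, then $\bS$ has the finiteness property.

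The plan is to observe that this is nothing more than the contrapositive of Theorem~\ref{thm2.3}, which I am allowed to assume. Theorem~\ref{thm2.3} asserts that for an irreducible bounded $\bS$ \emph{lacking} the finiteness property, \emph{every} matrix $A\in\bS_{\!\infty}$ satisfies $\det A=0$. Logically, "$\bS$ has no finiteness property $\Rightarrow$ every $A\in\bS_{\!\infty}$ is singular" is equivalent to "there exists a nonsingular $A\in\bS_{\!\infty}$ $\Rightarrow$ $\bS$ has the finiteness property." So the entire content of Theorem~\ref{cor2.4} is the logical transposition of Theorem~\ref{thm2.3}, with the quantifier flip ("every $A$ is singular" negating to "some $A$ is nonsingular") handled by elementary predicate logic.

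Concretely, first I would argue by contradiction: suppose $\bS$ is irreducible and bounded, that there exists a nonsingular $A\in\bS_{\!\infty}$, yet $\bS$ does \emph{not} possess the finiteness property. Then the hypotheses of Theorem~\ref{thm2.3} are met, so its conclusion forces $\det A=0$ for \emph{this very same} $A$, contradicting the assumed nonsingularity $\det A\neq0$. Hence the assumption that $\bS$ lacks the finiteness property is untenable, and $\bS$ must have it. No new estimate, no new construction, and no appeal to the uniformly sub-peripheral machinery is needed here --- all of that work has already been discharged inside the proof of Theorem~\ref{thm2.3} (via Remark~\ref{R1} and the determinant bound $|\det\bS_{\!\sigma(n_\ell)}|\le|\lambda_\ell|\to0$).

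There is essentially no obstacle, since the result is a formal restatement; the only thing worth double-checking is that the two theorems share exactly the same standing hypotheses. Both assume $\bS$ is a bounded irreducible subset of $\mathbb{F}^{d\times d}$, so irreducibility (which guarantees $0<\rho(\bS)<+\infty$ and that $\bS_{\!\infty}$ is nonempty and bounded, per \cite{Wirth:LAA02}) carries over verbatim, and the contrapositive is clean. I would therefore write the proof in one or two sentences, explicitly naming Theorem~\ref{thm2.3} as the equivalent statement being transposed.
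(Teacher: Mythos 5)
Your proposal matches the paper exactly: the paper introduces this statement with the phrase ``It is interesting to formulate Theorem~\ref{thm2.3} equivalently as follows,'' i.e.\ it is offered as the contrapositive of Theorem~\ref{thm2.3} with no further proof, which is precisely your argument. The logic is correct and complete.
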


Recall from \cite{Morris:LAA10} that an irreducible set of matrices $\bS$
is said to have \emph{rank one property} if every nonzero element of
$\bS_{\!\infty}$ has rank one. Then from
\cite[Corollary~1.6]{Morris:LAA10} it follows that for each $\mathrm{card}(K),d\ge2$
there exists an irreducible finite set of matrices
$\bS=\{S_k\}_{k\in K}\subset\mathbb{F}^{d\times d}$ which satisfies both the finiteness and the rank one properties; for example,
\[
\bS=\left\{\left[\begin{matrix}1&0\\0&\lambda\end{matrix}\right],\left[\begin{matrix}0&\lambda\\\lambda&0\end{matrix}\right]\right\},\quad
\textrm{where }0<|\lambda|<1,
\]
has the finiteness and rank-one properties \cite[Example
2]{Morris:LAA10}. Therefore, the condition presented in
Theorem~\ref{cor2.4} does not need to be necessary for the
finiteness property. However, there always exist open subsets of
irreducible pairs $\bS=\{S_1,S_2\}\subset\mathbb{R}^{2\times 2}$ in
which the rank one property does not hold \cite{Morris:LAA10} and
hence the finiteness property holds from Theorem~\ref{cor2.4}. In
addition, the counterexample $\bS(\alpha_*)$ of Hare \textit{et
al.} \cite{HMST:AdvMath11}, mentioned in Section~\ref{sec1.2}, has
the rank one property from Theorem~\ref{cor2.4}.
\section{A stability criterion from periodically switched
stability}\label{sec3}
Let us recall that a finite set
$\bS=\{S_k\}_{k\in K}\subset\mathbb{F}^{d\times d}$ is called
\emph{periodically switched stable} if $\rho(\bS_{\!\sigma})<1$ for all
$\sigma\in K^n$ and all $n\ge1$; see, e.g.,
\cite{SWMWK:SIAMREV07,DHX:AUT11}. The following question of substantial
importance was posed by E.\,S.~Pyatniski\v{i} in 1980s: when does
periodically switched stability imply the absolute stability for $\bS$?

Since the spectral finiteness property is equivalent to the absolute
stability of some periodically switched stable system, then from
Theorem~\ref{T-main} it follows immediately the following stability
criterion:

\begin{theorem}\label{thm3.1}
Let $\bS=\{S_k\}_{k\in K}\subset\mathbb{F}^{d\times d}$ be
periodically switched stable, where $K$ is a finite index set. If
there exists a sequence of words
$\langle\sigma(n_{\ell})\rangle_{\ell=1}^{+\infty}$ satisfying the
requirements of Theorem~\ref{T-main}, then $\bS$ is absolutely
stable; that is, $\|S_{i_1}\dotsm S_{i_n}\|\to0$ as $n\to+\infty$
for all one-sided infinite switching sequences
$i_{\bcdot}\colon\mathbb{N}\rightarrow K$.
\end{theorem}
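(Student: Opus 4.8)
The plan is to treat the statement as a direct corollary of Theorem~\ref{T-main} combined with the basic inequalities \eqref{E0}. First I would extract two facts for free from periodic switched stability. Taking $n=1$ and $\sigma=\{k\}$ in its definition gives $\rho(S_k)<1$ for every $k\in K$, and since $K$ is finite this yields $\sup_{k\in K}\rho(S_k)=\max_{k\in K}\rho(S_k)<1$. Moreover, $\rho(\bS_{\!\sigma})<1$ for all $\sigma\in K^n$ gives $\rho_n(\bS)\le1$ for every $n$, whence $\rho(\bS)=\sup_{n\ge1}\sqrt[n]{\rho_n(\bS)}\le1$; in particular $\rho(\bS)$ is finite and all quantities entering the hypotheses of Theorem~\ref{T-main} are well defined.

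Next, since by assumption there is a sequence $\langle\sigma(n_\ell)\rangle_{\ell=1}^{+\infty}$ with uniformly sub-peripheral spectrum satisfying \eqref{E1}, Theorem~\ref{T-main} applies verbatim and yields both the finiteness property and the identity $\rho(\bS)=\sup_{k\in K}\rho(S_k)$. Combining this identity with the first step upgrades the earlier bound to the \emph{strict} inequality $\rho(\bS)=\max_{k\in K}\rho(S_k)<1$, which is the crux of the reduction.

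It then remains to pass from $\rho(\bS)<1$ to the norm decay. Because $K$ is finite, \eqref{E0} gives $\hat{\rho}(\bS)=\rho(\bS)<1$, so $\lim_{n\to\infty}\sqrt[n]{\hat{\rho}_n(\bS)}<1$, and one can fix $q$ with $\rho(\bS)<q<1$ together with an integer $N$ such that $\hat{\rho}_n(\bS)\le q^n$ for all $n\ge N$. Since $\|S_{i_1}\dotsm S_{i_n}\|\le\hat{\rho}_n(\bS)\le q^n$ for every switching sequence $i_{\bcdot}\colon\mathbb{N}\to K$ and every $n\ge N$, we conclude $\|S_{i_1}\dotsm S_{i_n}\|\to0$ uniformly in $i_{\bcdot}$, which is exactly absolute stability. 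I do not expect a serious obstacle: the argument is a chain of implications, and the only point requiring care is keeping the inequality strict. This is precisely where finiteness of $K$ is used twice, both to convert $\sup_{k\in K}\rho(S_k)$ into a maximum strictly below $1$ and to invoke the equality $\hat{\rho}(\bS)=\rho(\bS)$ from \eqref{E0}, which the paper states only for finite matrix sets.
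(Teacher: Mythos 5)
Your proposal is correct and follows essentially the same route as the paper, which simply notes that Theorem~\ref{T-main} gives the finiteness property (indeed $\rho(\bS)=\sup_{k\in K}\rho(S_k)$), so periodic switched stability forces $\rho(\bS)<1$ and hence absolute stability. You have merely filled in the standard details (finiteness of $K$ to get a strict maximum, the Berger--Wang equality $\hat{\rho}(\bS)=\rho(\bS)$ from \eqref{E0}, and the geometric decay of $\hat{\rho}_n(\bS)$), all of which are handled correctly.
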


So, in the situation of Theorem~\ref{T-main}, the stability of $\bS$ is
algorithmically decidable.
\section{Concluding remarks}\label{sec4}
In this paper, we have presented a short survey on the spectral finiteness
property for a finite set of $d\times d$ matrices. We have proved also
that if a bounded set $\bS$ of $d\times d$ matrices satisfies the
so-called uniformly sub-peripheral spectrum condition and an approximation
property of Lyapunov exponents, then $\bS$ possesses the spectral finiteness
property. This result has direct implication for the stability problem for
finite sets of matrices.

\bibliographystyle{elsarticle-num}
\bibliography{DaiKoz}
\end{document}